\def\@logofont{\footnotesize}
\def\@setaddresses{\par
  \nobreak \begingroup
  \footnotesize
  \def\author##1{\nobreak\addvspace\bigskipamount}%
  \def\\{\par\nobreak}%
  \interlinepenalty\@M
  \def\address##1##2{\begingroup
    \par\addvspace\bigskipamount\indent
    \@ifnotempty{##1}{(\ignorespaces##1\unskip) }%
    {\scshape\ignorespaces##2}\par\endgroup}%
  \def\curraddr##1##2{\begingroup
    \@ifnotempty{##2}{\nobreak\indent\curraddrname
      \@ifnotempty{##1}{, \ignorespaces##1\unskip}\/:\space
      ##2\par}\endgroup}%
  \def\email##1##2{\begingroup
    \@ifnotempty{##2}{\nobreak\indent\emailaddrname
      \@ifnotempty{##1}{, \ignorespaces##1\unskip}\/:\space
      \ttfamily##2\par}\endgroup}%
  \def\urladdr##1##2{\begingroup
    \def~{\char`\~}%
    \@ifnotempty{##2}{\nobreak\indent\urladdrname
      \@ifnotempty{##1}{, \ignorespaces##1\unskip}\/:\space
      \ttfamily##2\par}\endgroup}%
  \addresses
  \endgroup
}
\renewcommand*\subjclass[2][2010]{%
  \def\@subjclass{#2}%
  \@ifundefined{subjclassname@#1}{%
    \ClassWarning{\@classname}{Unknown edition (#1) of Mathematics
      Subject Classification; using '2000'.}%
  }{%
    \@xp\let\@xp\subjclassname\csname subjclassname@#1\endcsname
  }%
}
\def\BState{\State\hskip-\ALG@thistlm}
\theoremstyle{plain}
\newtheorem{theorem}{Theorem}[section]
\newtheorem{lemma}[theorem]{Lemma}
\newtheorem{corollary}[theorem]{Corollary}
\newtheorem{conjecture}[theorem]{Conjecture}
\theoremstyle{definition}
\newtheorem{definition}[theorem]{Definition}
\newtheorem{example}[theorem]{Example}
\newtheorem{remark}[theorem]{Remark}
\def\supp{\mathrm{supp}}
\begin{document}

\title[A note on  matchings in abelian groups]
{A note on   matchings in abelian groups}

\author[M. Aliabadi]{Mohsen Aliabadi}
\address{Mohsen Aliabadi \\ 
Department of Mathematics, Statistics, and Computer Science\\ University of  Illinois\\  851 S. Morgan St, Chicago, IL 60607, USA}
\email{maliab2@uic.edu}

\author[S. Soleimany Dizicheh]{Shiva Soleimany Dizicheh}
\address{Shiva Soleimany Dizicheh\\ 
Department of Computing Science\\ University of Algebra\\  Edmonton, Canada}
\email{soleiman@ualberta.ca}

\thanks{Keywords and phrases. acyclic matching, acyclicity sequence, weak acyclic matching property.}
\thanks{2010 Mathematics Subject Classification. Primary: 05D15; Secondary: 11B75, 20D60.}

\begin{abstract}
The question of finding sets of monomials which are removable from a generic homogeneous polynomial through a linear change in its variables was  raised by E. K. Wakeford in 1916. This linear algebra question motivated J. Losonczy to define the concept of acyclic matchings in $\mathbb{Z}^n$, and later in abelian groups.  In this paper, we give a constructive approach to study the acyclic matchings in cyclic groups. We also introduce the notion of weakly matched subsets and investigate its relation with matchings in abelian groups.
\end{abstract}

\maketitle

\section{Introduction}
Let $B$ be a finite subset of the abelian group $G$ which does not contain the neutral element. For any subset $A$ in $G$ with the same cardinality as $B$, a {\it matching} from $A$ to $B$ is defined to be a bijection $f:A\to B$ such that for any $a\in A$ we have $a+f(a)\not\in A$. For any matching $f$ as above, the associated multiplicity function $m_f:G\to \mathbb{Z}_{\geq0}$ is defined via the rule:
\begin{align}
\forall x\in G,\quad m_ f(x)=\#\{a\in A:\, a+ f(a)=x\}.
\end{align}
A matching $ f:A\to B$ is called {\it acyclic} if for any matching $g:A\to B$, $m_f=m_g$ implies $f=g$. The notion of matchings in abelian groups was introduced by Fan and Losonczy in \cite{4} in order to generalize a geometric property of lattices in Euclidean space.   The motivation to study acyclic matchings is their relations with an old problem of Wakeford concerning canonical forms for symmetric tensors \cite{6}. This notion has been investigated in literature in different aspects (e.g., the existence of acyclic matchings in subsets of abelian torsion-free groups and acyclic groups of prime order.)  In this paper, we will investigate acyclic matchings in certain subsets of  abelian groups. We will   introduce the concept of strongly acylically matched subsets of abelian groups and will provide a family of such subsets. Finally, we introduce the notion of weakly matchable subsets and prove that it implies the existence of matchings in usual sense.

\section{Preliminary}
Throughout this paper,  we assume that $G$ is an abelian group and $A$, $B$ are two non-empty finite subsets of $G$ with the same cardinality and $0\not\in B$. Following 
Losonczy  in \cite{5}, we say that $G$ possesses the {\it acyclic matching property} if for every pair $A$ and $B$ of finite subsets of $G$ with $\# A=\# B$ and $0\not\in B$ there is at least one matching from $A$ to $B$. We say that $G$ has the {\it weak acyclic matching property} if for every pair $A$ and $B$ of $G$ with $\# A=\# B$ and $A\cap (A+B)=\emptyset$, there is at least one acyclic matching from $A$ to $B$. It was proven in \cite{3} that $\mathbb{Z}^n$ has the acyclic matching property. Later, this result was generalized by Losonczy  to abelian torsion-free groups \cite{5}. Also, it was shown in \cite{1} that there are infinitely many primes $p$ for which $\mathbb{Z}/p\mathbb{Z}$ does not have the acyclic matching property. We conjecture that all abelian groups possess the weak acyclic matching property:
\begin{conjecture}\label{c2.1}
Let $G$ be an abelian group. Then $G$ possesses the weak acyclic matching property.
\end{conjecture}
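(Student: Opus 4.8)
\emph{Reduction to the finite case.} Since $A$ and $B$ are finite, set $H=\langle A\cup B\rangle$. For any bijection $f\colon A\to B$ one has $a+f(a)\in A+B\subseteq H$, so both the matching condition and the multiplicity functions of such bijections are unchanged when $G$ is replaced by $H$; hence $f$ is acyclic in $G$ iff it is acyclic in $H$, and it suffices to prove the conjecture for finitely generated $G$, say $G\cong\mathbb{Z}^{r}\oplus T$ with $T$ finite. The torsion-free direction is already settled: torsion-free abelian groups admit acyclic matchings for every admissible pair \cite{3,5}, and $A\cap(A+B)=\emptyset$ forces $0\notin B$, so such a pair is admissible. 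Thus the new content lies in the presence of torsion, and the principal subcase is $G$ \emph{finite}.

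\emph{A permanent reformulation and a candidate.} Let $G$ be finite, $n=\#A$, and assume $A\cap(A+B)=\emptyset$, so that \emph{every} bijection $f\colon A\to B$ is a matching. With commuting indeterminates $\{z_{x}\}_{x\in G}$ and the matrix $M=(z_{a+b})_{a\in A,\,b\in B}$ we have $\prod_{a}M_{a,f(a)}=\prod_{x}z_{x}^{m_{f}(x)}$, hence
\[
\operatorname{perm}M=\sum_{f\colon A\to B}\ \prod_{x\in G}z_{x}^{m_{f}(x)},
\]
and $f$ is acyclic precisely when this monomial occurs in $\operatorname{perm}M$ with coefficient $1$. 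To pin down a candidate, pick a generic real weight $w\colon A+B\to\mathbb{R}$ and maximize $\sum_{a}w(a+f(a))$ over $f$. Equality of multiplicity functions always forces equality of this weight, while for generic $w$ the converse holds as well; hence the weight-maximizing monomial is multiplicity-free exactly when the maximum is attained by a single bijection. So the conjecture reduces to showing that $\operatorname{perm}M$ always has some coefficient-$1$ monomial, and here the hypothesis $A\cap(A+B)=\emptyset$ must be used in an essential way: by \cite{1} the conclusion fails for certain $A,B$ in $\mathbb{Z}/p\mathbb{Z}$ once it is dropped.

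\emph{Attacking the finite case.} The hypothesis is strongly constraining. From $\#(A+B)\le\#G-\#A$ together with Kneser's inequality $\#(A+B)\ge 2\#A-\#K$, where $K=\operatorname{Stab}(A+B)$, we get $3\#A\le\#G+\#K$. If $K$ is trivial (in particular when $\#G$ is prime, by Cauchy--Davenport) this yields $\#A\le(\#G+1)/3$, and in this small regime I would produce an acyclic matching by the polynomial method: assemble a polynomial in variables $x_{1},\dots,x_{n}$ refining the product of the Vandermonde $\prod_{i<j}(x_{i}-x_{j})$, its translate $\prod_{i<j}((a_{i}+x_{i})-(a_{j}+x_{j}))$ and the factors constraining $(x_{1},\dots,x_{n})$ to enumerate $B$, with degree low enough that an appropriate top coefficient is a nonzero scalar; Alon's Combinatorial Nullstellensatz then supplies an enumeration of $B$ realizing an acyclic matching. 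When $K$ is nontrivial, $A+B$ is a union of $K$-cosets avoided by $A$, and one passes to $\bar{G}=G/K$ with the images $\bar{A},\bar{B}$, where the stabilizer of $\bar{A}+\bar{B}$ is trivial, and induct on $\#G$.

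\emph{The main obstacle.} The difficult step is expected to be precisely this induction. The projection $A\to\bar{G}$ need not be injective, and it may send some element of $B$ to $0$, so that the cardinalities and the forbidden element behave badly and an acyclic matching downstairs does not obviously lift to one upstairs; understanding how to refatten it along the cosets of $K$ is the crux. Two further points require care: inside a fixed admissible pair a matching with pairwise distinct sums $a+f(a)$ may still fail to be acyclic, because a short cyclic rearrangement of $f$ can reproduce the multiplicity function, so the polynomial used in the small regime must be designed to destroy such cyclic rearrangements rather than merely to separate the sums; and combining the free summand $\mathbb{Z}^{r}$ with the finite summand $T$ in $G\cong\mathbb{Z}^{r}\oplus T$ calls for a separate, though presumably more routine, argument.
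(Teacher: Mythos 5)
The statement you are proving is Conjecture~\ref{c2.1}; the paper offers no proof and explicitly says it ``will still remain unsolved,'' so there is nothing to compare your argument against except the paper's own partial program (Conjectures~\ref{c3.1} and~\ref{c3.7}). Your attempt is an honest research program, not a proof, and you largely say so yourself. The parts that do work: the reduction to $H=\langle A\cup B\rangle$ and hence to finitely generated $G$ is sound, since both the matching condition and the multiplicity functions live inside $H$; the observation that $A\cap(A+B)=\emptyset$ forces $0\notin B$ and makes every bijection a matching is correct; and the disposal of the torsion-free case via Losonczy \cite{5} is legitimate. Your permanent reformulation --- that $f$ is acyclic iff the monomial $\prod_x z_x^{m_f(x)}$ has coefficient $1$ in $\operatorname{perm}(z_{a+b})_{a,b}$, and that a generic linear functional on exponent vectors isolates a single candidate multiplicity vector --- is a faithful restatement of the problem and runs parallel to the paper's own lexicographic scheme (the sets $F^{(i)}_{(A,B)}$ maximize the sorted multiplicity vector step by step, which is another way of selecting an extreme monomial). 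But a reformulation is where the difficulty begins, not where it ends: ``some extreme monomial of the permanent has coefficient $1$'' is exactly as open as the conjecture itself.

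The concrete gaps are these. First, the Combinatorial Nullstellensatz step is never executed: no polynomial is written down, no top coefficient is computed, and --- as you yourself note --- the Vandermonde-type factors you propose would at best force the sums $a+f(a)$ to be pairwise distinct, which does not imply acyclicity. The paper's Example~\ref{ex3.4} makes this failure quantitative: of $2436$ matchings whose multiplicity function is $\{0,1\}$-valued, only $8$ are acyclic, so ``multiplicity-free'' is very far from ``acyclic'' and your polynomial would have to encode the much stronger condition that no other bijection realizes the same multiset of sums. No mechanism for that is proposed. Second, the induction through $G/K$ when the Kneser stabilizer $K$ is nontrivial is identified as the crux and then left entirely open; the lifting problem you describe (non-injective projection of $A$, image of $B$ possibly containing $0$) is real and unresolved. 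Third, the mixed case $\mathbb{Z}^r\oplus T$ is deferred with no argument. So the proposal contains no step that advances the conjecture beyond where the paper leaves it, though the permanent/Newton-polytope framing is a reasonable alternative lens on the paper's Conjecture~\ref{c3.1} and might be worth developing in its own right.
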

In the next section, we will provide a constructive approach to investigate the  weak acyclic matching property in $G$. However, Conjecture \ref{c2.1} will still remain unsolved.

\section{A constructive approach to Conjecture \ref{c2.1}}
For any matching $f:A\to B$, define {\it support} of $f$ as $\supp(f)=\{x\in G:\, m_f(x)>0\}$. We associate the finite ordered sequence  $x_1^{(f)},x_2^{(f)},\ldots,x_{n_f}^{(f)}$ to $f$ such that:
\begin{itemize}
\item[(i)]
$x_i=m_f(x)$, for some $x\in \supp(f)$,
\item[(ii)]
the sequence is non-increasing, i.e. $x_1^{(f)}\geq x_2^{(f)}\geq\cdots\geq x_{n_f}^{(f)}$,
\item[(iii)]
$n_f=\#\supp(f)$.
\end{itemize}
We call the above sequence the {\it acyclicity sequence} of $f$.

Denote the set of all matchings from $A$ to  $B$ by $\mathcal{M}(A,B)$. Define 
\begin{align*}
C_1^{(A,B)}&=\max\left\{ x_1^{(f)}:\, f\in\mathcal{M}(A,B)\right\},\\
F^{(1)}_{(A,B)}&=\left\{f\in\mathcal{M}(A,B):\, x_1^{(f)}=C_1^{(A,B)}\right\}.
\end{align*}
If $\# F^{(1)}_{(A,B)}=1$ and $F_{(A,B)}^{(1)}=\{f\}$, then $f$ is an acyclic matching as for any $g\in\mathcal{M}(A,B)$ with $f\neq g$ we have $m_f\neq m_g$. If $\# F_{(A,B)}^{(1)}>1$, define 
\begin{align*}
C_2^{(A,B)}&=\max\left\{ x_2^{(f)}:\, f\in F_{(A,B)}^{(1)}\right\},\\
F^{(2)}_{(A,B)}&=\left\{f\in F_{(A,B)}^{(1)}:\, x_2^{(f)}=C_2^{(A,B)}\right\}.
\end{align*}
Similar to the previous case, if $\# F_{(A,B)}^{(2)}=1$ and $F_{(A,B)}^{(2)}=\{f\}$, then $f$ is acyclic. Otherwise,  $\# F^{(2)}_{(A,B)}>1$ and we define
\begin{align*}
C_3^{(A,B)}&=\max\left\{ x_3^{(f)}:\, f\in F_{(A,B)}^{(2)}\right\},\\
F^{(3)}_{(A,B)}&=\left\{f\in F_{(A,B)}^{(2)}:\, x_3^{(f)}=C_3^{(A,B)}\right\}.
\end{align*}
Continuing in this manner, we obtain sequences $\left\{C_i^{(A,B)}\right\}$ and $\left\{F_{(A,B)}^{(i)}\right\}$ with the following recurrence relations:
\begin{align*}
C_i^{(A,B)}&=\max\left\{ x_i^{(f)}:\, f\in F_{(A,B)}^{(i-1)}\right\},\\
F^{(i)}_{(A,B)}&=\left\{f\in F_{(A,B)}^{(i-1)}:\, x_i^{(f)}=C_i^{(A,B)}\right\},
\end{align*}
where $i>1$. 

Since $\mathcal{M}(A,B)<\infty$, the process described above will terminate after finitely many steps. Assume that $F_{(A,B)}^{(t)}\neq\emptyset$ and $F_{(A,B)}^{(t+1)}=\emptyset$. We call $t$ the {\it acyclicity index} of $\mathcal{M}(A,B)$.
\begin{conjecture}\label{c3.1}
Let $F^{(t)}_{(A,B)}$ be as above, that is, $t$ is the acyclicity index of $\mathcal{M}(A,B)$. If $A\cap(A+B)=\emptyset$, then every $f\in F^{(t)}_{(A,B)}$ is acyclic.
\end{conjecture}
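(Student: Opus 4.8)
The plan is to prove the statement by induction on $n=\#A$, peeling off a uniquely determined ``top layer'' of the matching. Two preliminaries make this possible. First, the hypothesis $A\cap(A+B)=\emptyset$ means that for every bijection $f\colon A\to B$ and every $a\in A$ we have $a+f(a)\in A+B$, hence $a+f(a)\notin A$; thus $\mathcal{M}(A,B)$ is exactly the set of all bijections $A\to B$, and the same holds for any pair of equal-size subsets of $A$ and $B$. Second, unravelling the recursion: each acyclicity sequence is a partition of $n$ (the multiplicities sum to $\#A$), so the sequences in a given $F^{(i)}_{(A,B)}$ cannot have differing lengths, the index $t$ is their common length, and $F^{(t)}_{(A,B)}=\{f\in\mathcal{M}(A,B):\ \lambda^{(f)}=\lambda^{\ast}\}$, where $\lambda^{\ast}=(C_1^{(A,B)},\dots,C_t^{(A,B)})$ is the lexicographically largest partition of $n$ realised as an acyclicity sequence. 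So it suffices to show: if $\lambda^{(f)}=\lambda^{\ast}$ then $f$ is acyclic.

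Fix $f$ with $\lambda^{(f)}=\lambda^{\ast}$ and suppose, for contradiction, that $g\ne f$ is a matching with $m_g=m_f$. Choose $x_1$ with $m_f(x_1)=C_1^{(A,B)}$ and put $A_1=\{a\in A:\ a+f(a)=x_1\}$, so $\#A_1=C_1^{(A,B)}$ and $f$ sends each $a\in A_1$ to $x_1-a$. For any $x$, extending the injection $a\mapsto x-a$ on $A\cap(x-B)$ to a bijection $A\to B$ (a matching, by the first observation) yields a matching with multiplicity at least $\#(A\cap(x-B))$ at $x$; hence $\#(A\cap(x-B))\le C_1^{(A,B)}$ for all $x$, and at $x=x_1$ this forces $A_1=A\cap(x_1-B)$. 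Since $m_g(x_1)=C_1^{(A,B)}$, the $C_1^{(A,B)}$ elements $a$ with $a+g(a)=x_1$ lie in $A\cap(x_1-B)=A_1$, and counting forces them to be all of $A_1$ with $g(a)=x_1-a=f(a)$; thus $g|_{A_1}=f|_{A_1}$. Set $\tilde A=A\setminus A_1$ and $\tilde B=B\setminus f(A_1)$. Then $\tilde A\cap(\tilde A+\tilde B)=\emptyset$, $0\notin\tilde B$, and no element of $\tilde A$ can sum to $x_1$, so $m_{f|_{\tilde A}}(x_1)=m_{g|_{\tilde A}}(x_1)=0$ and $m_{f|_{\tilde A}}(x)=m_f(x)=m_g(x)=m_{g|_{\tilde A}}(x)$ for $x\ne x_1$; since $g|_{\tilde A}\ne f|_{\tilde A}$, the matching $f|_{\tilde A}\colon\tilde A\to\tilde B$ is again not acyclic, and its acyclicity sequence is $(C_2^{(A,B)},\dots,C_t^{(A,B)})$.

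To close the induction I must check that $f|_{\tilde A}\in F^{(\tilde t)}_{(\tilde A,\tilde B)}$, i.e. that lexicographic maximality descends to the smaller pair. If some matching $h\colon\tilde A\to\tilde B$ had $\lambda^{(h)}>_{\mathrm{lex}}(C_2^{(A,B)},\dots,C_t^{(A,B)})$, then, since no element of $\tilde A$ sums to $x_1$, the matching $\hat h\colon A\to B$ equal to $f$ on $A_1$ and to $h$ on $\tilde A$ has acyclicity sequence obtained from $\lambda^{(h)}$ by inserting one part equal to $C_1^{(A,B)}$; by the elementary fact that inserting a common part preserves strict lexicographic order among partitions of a fixed integer, this sequence is $>_{\mathrm{lex}}\lambda^{\ast}$, contradicting the maximality of $\lambda^{\ast}$. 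Hence $f|_{\tilde A}$ is lexicographically maximal for $(\tilde A,\tilde B)$. Finally, if $C_1^{(A,B)}=n$ then $f$ is manifestly the unique matching with multiplicity function $m_f$, and we are done; otherwise $\#\tilde A=n-C_1^{(A,B)}<n$, so the induction hypothesis applied to $(\tilde A,\tilde B)$ makes $f|_{\tilde A}$ acyclic, contradicting the previous paragraph. Therefore $f$ is acyclic, completing the induction.

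The main obstacle is the content of the second paragraph: isolating a layer $A_1$ of $f$ that is determined by $m_f$ alone, independently of the competing matching $g$ — this is exactly where the hypothesis $A\cap(A+B)=\emptyset$ is indispensable, through the bound $\#(A\cap(x-B))\le C_1^{(A,B)}$ — together with the claim that lexicographic maximality of the acyclicity sequence is inherited by $(\tilde A,\tilde B)$. Once these two points are in place the induction runs smoothly, the only remaining ingredient being the routine monotonicity of lexicographic order under inserting a common part into a partition.
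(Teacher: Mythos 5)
The paper does not prove this statement: it appears only as Conjecture~\ref{c3.1} and is left open (the supporting evidence offered is Examples~\ref{e3.3} and~\ref{ex3.4} and the algorithms of Section~4), so there is no proof of record to compare yours against. Having checked your argument step by step, I cannot find a gap; if it holds up it settles Conjecture~\ref{c3.1} and hence, by Remark~\ref{r3.2}, Conjecture~\ref{c2.1}. The two load-bearing points are exactly the ones you flag. First, the identity $C_1^{(A,B)}=\max_x \#\bigl(A\cap(x-B)\bigr)$: the inequality $\le$ is trivial, and $\ge$ uses that under $A\cap(A+B)=\emptyset$ every bijection $A\to B$ is a matching, so the partial map $a\mapsto x-a$ on $A\cap(x-B)$ (whose image is $B\cap(x-A)$, of the same cardinality) extends to a matching realising multiplicity $\#\bigl(A\cap(x-B)\bigr)$ at $x$. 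This forces the top layer $\{a:\,a+f(a)=x_1\}$ of any $f$ with $m_f(x_1)=C_1^{(A,B)}$ to be all of $A\cap(x_1-B)$, hence determined by $m_f$ alone, which is what makes the competitor $g$ agree with $f$ there. Second, the descent of lexicographic maximality to $(\tilde A,\tilde B)$ via the glued matching $\hat h$; here your appeal to the general ``insert a common part'' fact is more than you need, since every part of $\lambda^{(h)}$ is bounded by $\max_x\#\bigl(\tilde A\cap(x-\tilde B)\bigr)\le C_1^{(A,B)}$, so $C_1^{(A,B)}$ is inserted at the front of both sequences and the comparison is immediate.

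Two small imprecisions are worth fixing in a write-up. (i) Your claim that the sequences in a given $F^{(i)}_{(A,B)}$ cannot have differing lengths is false for $i<t$ (for instance $F^{(1)}_{(A,B)}$ can contain matchings with sequences $(2,2)$ and $(2,1,1)$); what is true, and is all you use, is that at the terminal index $C_1^{(A,B)}+\cdots+C_t^{(A,B)}=\#A$, which forces every member of $F^{(t)}_{(A,B)}$ to have sequence exactly $(C_1^{(A,B)},\dots,C_t^{(A,B)})$, and that this is the lexicographic maximum over $\mathcal{M}(A,B)$. (ii) You should record explicitly that $g(\tilde A)=\tilde B$ (immediate from $g|_{A_1}=f|_{A_1}$) and that $A\cap(A+B)=\emptyset$ with $A\neq\emptyset$ already forces $0\notin B\supseteq\tilde B$, so the pair $(\tilde A,\tilde B)$ genuinely satisfies the inductive hypotheses. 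Given that the argument would resolve the paper's central conjecture, it deserves independent verification, but I see nothing wrong with it.
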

\begin{remark}\label{r3.2}
Conjecture \ref{c3.1} implies Conjecture \ref{c2.1}.
\end{remark}
\begin{example}\label{e3.3}
Consider the following subsets of $\mathbb{Z}/14\mathbb{Z}$:
\[A=\{1,3,5,7\}\quad\text{and}\quad B=\{1,3,7,9\}\]
One  can check that $A\cap (A+B)=\emptyset$. Therefore every bijection  from $A$ to $B$ is a matching. The followings are all 24 matchings from $A$ to $B$ and their acyclicity sequences:

{\begin{table}[h!]
\centering
\caption{Acyclicity table of $\mathcal{M}(A,B)$}
\resizebox{\textwidth}{!}{\begin{tabular}{|c|c|c|c|c|c|c|c|}
\hline
Matching & Rule & Support & Acyclicity sequence & Matching & Rule & Support & Acyclicity sequence\\
\hline
$f_1$ & $\begin{array}{c}1\to 1\\[-.8mm] 3\to 3\\[-.8mm] 5\to 7\\[-.8mm] 7\to 9\end{array}$ & $\{2,6,12\}$ & $2,1,1$ &
$f_{13}$ & $\begin{array}{c}1\to 7\\[-.8mm] 3\to 1\\[-.8mm] 5\to 3\\[-.8mm] 7\to 9\end{array}$ & $\{2,4,8\}$ & $2,1,1$
\\
\hline
$f_2$ & $\begin{array}{c}1\to 1\\[-.8mm] 3\to 3\\[-.8mm] 5\to 9\\[-.8mm] 7\to 7\end{array}$ & $\{0,2,6\}$ & $2,1,1$&
$f_{14}$ & $\begin{array}{c}1\to 7\\[-.8mm] 3\to 1\\[-.8mm] 5\to 9\\[-.8mm] 7\to 3\end{array}$ & $\{0,4,8,10\}$ & $1,1,1,1$
\\
\hline
$f_3$ & $\begin{array}{c}1\to 1\\[-.8mm] 3\to 9\\[-.8mm] 5\to 3\\[-.8mm] 7\to 7\end{array}$ & $\{0,2,8,12\}$ & $1,1,1,1$&
$f_{15}$ & $\begin{array}{c}1\to 7\\[-.8mm] 3\to 3\\[-.8mm] 5\to 1\\[-.8mm] 7\to 9\end{array}$ & $\{2,6,8\}$ & $2,1,1$
\\
\hline
$f_4$ & $\begin{array}{c}1\to 1\\[-.8mm] 3\to 9\\[-.8mm] 5\to 7\\[-.8mm] 7\to 3\end{array}$ & $\{2,10,12\}$ & $2,1,1$&
$f_{16}$ & $\begin{array}{c}1\to 7\\[-.8mm] 3\to 3\\[-.8mm] 5\to 9\\[-.8mm] 7\to 1\end{array}$ & $\{0,6,8\}$ & $2,1,1$
\\
\hline
$f_5$ & $\begin{array}{c}1\to 1\\[-.8mm] 3\to 7\\[-.8mm] 5\to 3\\[-.8mm] 7\to 9\end{array}$ & $\{2,8,10\}$ & $2,1,1$&
$f_{17}$ & $\begin{array}{c}1\to 7\\[-.8mm] 3\to 9\\[-.8mm] 5\to 1\\[-.8mm] 7\to 3\end{array}$ & $\{6,8,10,12\}$ & $1,1,1,1$
\\
\hline
$f_6$ & $\begin{array}{c}1\to 1\\[-.8mm] 3\to 7\\[-.8mm] 5\to 9\\[-.8mm] 7\to 3\end{array}$ & $\{0,2,10\}$ & $2,1,1$&
$f_{18}$ & $\begin{array}{c}1\to 7\\[-.8mm] 3\to 9\\[-.8mm] 5\to 3\\[-.8mm] 7\to 1\end{array}$ & $\{8,12\}$ & $3,1$
\\
\hline
$f_7$ & $\begin{array}{c}1\to 3\\[-.8mm] 3\to 1\\[-.8mm] 5\to 7\\[-.8mm] 7\to 9\end{array}$ & $\{2,4,12\}$ & $2,1,1$&
$f_{19}$ & $\begin{array}{c}1\to 9\\[-.8mm] 3\to 1\\[-.8mm] 5\to 3\\[-.8mm] 7\to 7\end{array}$ & $\{0,4,8,10\}$ & $1,1,1,1$
\\
\hline
$f_8$ & $\begin{array}{c}1\to 3\\[-.8mm] 3\to 1\\[-.8mm] 5\to 9\\[-.8mm] 7\to 7\end{array}$ & $\{0,4\}$ & $2,2$&
$f_{20}$ & $\begin{array}{c}1\to 9\\[-.8mm] 3\to 1\\[-.8mm] 5\to 7\\[-.8mm] 7\to 3\end{array}$ & $\{4,10,12\}$ & $2,1,1$
\\
\hline
$f_9$ & $\begin{array}{c}1\to 3\\[-.8mm] 3\to 7\\[-.8mm] 5\to 9\\[-.8mm] 7\to 1\end{array}$ & $\{0,4,8,10\}$ & $1,1,1,1$&
$f_{21}$ & $\begin{array}{c}1\to 9\\[-.8mm] 3\to 3\\[-.8mm] 5\to 1\\[-.8mm] 7\to 7\end{array}$ & $\{0,6,10\}$ & $2,1,1$
\\
\hline
$f_{10}$ & $\begin{array}{c}1\to 3\\[-.8mm] 3\to 7\\[-.8mm] 5\to 1\\[-.8mm] 7\to 9\end{array}$ & $\{2,4,6,10\}$ & $1,1,1,1$&
$f_{22}$ & $\begin{array}{c}1\to 9\\[-.8mm] 3\to 3\\[-.8mm] 5\to 7\\[-.8mm] 7\to 1\end{array}$ & $\{6,8,10,12\}$ & $1,1,1,1$
\\
\hline
$f_{11}$ & $\begin{array}{c}1\to 3\\[-.8mm] 3\to 9\\[-.8mm] 5\to 1\\[-.8mm] 7\to 7\end{array}$ & $\{0,4,6,12\}$ & $1,1,1,1$&
$f_{23}$ & $\begin{array}{c}1\to 9\\[-.8mm] 3\to 7\\[-.8mm] 5\to 3\\[-.8mm] 7\to 1\end{array}$ & $\{8,10\}$ & $2,2$
\\
\hline
$f_{12}$ & $\begin{array}{c}1\to 3\\[-.8mm] 3\to 9\\[-.8mm] 5\to 7\\[-.8mm] 7\to 1\end{array}$ & $\{4,8,12\}$ & $2,1,1$&
$f_{24}$ & $\begin{array}{c}1\to 9\\[-.8mm] 3\to 7\\[-.8mm] 5\to 1\\[-.8mm] 7\to 3\end{array}$ & $\{6,10\}$ & $3,1$
\\
\hline
\end{tabular}}
\end{table}}

Using the acyclicity table of $\mathcal{M}(A,B)$, we  get $C_1^{(A,B)}=3$, $C_2^{(A,B)}=1$,  $F^{(1)}_{(A,B)}=\{f_{18},f_{24}\}$,   $F^{(2)}_{(A,B)}=\{f_{18},f_{24}\}$ and $F^{(3)}_{(A,B)}=\emptyset$. As we expected, both $f_{18}$ and $f_{24}$ are acyclic matchings. 
\end{example}

\begin{example}\label{ex3.4}
Running two Python codes (see Algorithm 1 and Algorithm 3 in Section 4)  for the existence of acyclic matchings by   computing $C_i^{(A,B)}$ and $F^{(i)}_{(A,B)}$, we get the following results for subsets $A=\{0,1,2,3,12,13,14, 15\}$ and  $B=\{4,5,6,7,8,16,17,18\}$ of $\mathbb{Z}/23\mathbb{Z}$ (Note that there are 40320 matchings from $A$ to $B$.)\\
$C_1^{(A,B)}=7$ and $F_1^{(A,B)}=\{f_1,f_2,f_3\}$, where $f_i$'s are given as follows: 
{\scriptsize\begin{table}[h!]
\centering
\begin{tabular}{|c|c|c|}
\hline
$f_1$ & $f_2$ & $f_3$\\
\hline
 $\begin{array}{l}0\to 7\\[-.8mm] 1\to 6\\[-.8mm] 2\to 5\\[-.8mm] 3\to 4\\[-.8mm] 12\to18\\[-.8mm] 13\to 17\\[-.8mm] 14\to16\\[-.8mm] 15\to 8\end{array}$ &  $\begin{array}{l}0\to 8\\[-.8mm] 1\to 7\\[-.8mm] 2\to6\\[-.8mm] 3\to 5\\[-.8mm] 12\to4\\[-.8mm] 13\to 18\\[-.8mm] 14\to17\\[-.8mm] 15\to 16\end{array}$ &  $\begin{array}{l}0\to 8\\[-.8mm] 1\to 18\\[-.8mm] 2\to 17\\[-.8mm] 3\to 16\\[-.8mm] 12\to7\\[-.8mm] 13\to 6\\[-.8mm] 14\to5\\[-.8mm] 15\to 4\end{array}$\\
\hline
\end{tabular}
\end{table}}

In the following table, supports and acyclicity sequences of the above matchings are shown.
{\scriptsize\begin{table}[h!]
\centering
\begin{tabular}{|c|c|c|c|}
\hline
&$f_1$ & $f_2$ & $f_3$\\
\hline
Support & $\{0,7\}$ & $\{8,16\}$ & $\{8,19\}$\\
\hline
Acyclicity Sequence & $7,1$ & $7,1$ & $7,1$\\
\hline
\end{tabular}
\end{table}}

We also have $C_2^{(A,B)}=1$ and $F_2^{(A,B)}=\{f_1,f_2,f_3\}$. Since $C_1^{(A,B)}+C_2^{(A,B)}=8=\#A$, then $F_3^{(A,B)}=\emptyset$ and so the acyclicity index of $\mathcal{M}(A,B)=2$. Our simulation results indicate that $f_1$, $f_2$ and $f_3$ are acyclic matchings as we expected.
\end{example}
\begin{theorem}\label{t3.5}
If $\# A=\# B=n>1$, $A\cap (A+B)=\emptyset$ and $B\cup\{0\}$ is a subgroup of $G$, then
\begin{itemize}
\item[(i)]
The acyclicity sequence of any $f\in\mathcal{M}(A,B)$  is $\underbrace{1,1,\ldots,1}_{n-\mathrm{times}}$.
\item[(ii)]
Every $f\in\mathcal{M}(A,B)$ is acyclic.
\end{itemize}
\end{theorem}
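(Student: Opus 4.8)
The plan is to exploit the fact that $H := B \cup \{0\}$ is a subgroup in order to translate the hypothesis $A \cap (A+B) = \emptyset$ into a statement about cosets. First I would record the following observation: \emph{distinct elements of $A$ lie in distinct cosets of $H$}. Indeed, if $a, a' \in A$ satisfy $a - a' \in H$, then either $a - a' = 0$, so $a = a'$, or else $a - a' \in B$, in which case $a = a' + (a - a') \in A + B$, contradicting $A \cap (A+B) = \emptyset$. I would also note in passing that, since $0 \notin B$, the hypothesis $A \cap (A+B) = \emptyset$ guarantees that every bijection $A \to B$ is already a matching, so $\mathcal{M}(A,B)$ is exactly the set of bijections from $A$ to $B$.

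For part (i), the key claim is that $m_f(x) \le 1$ for every $f \in \mathcal{M}(A,B)$ and every $x \in G$. To see this, suppose $a_1 + f(a_1) = a_2 + f(a_2)$ for some $a_1, a_2 \in A$. Then $a_1 - a_2 = f(a_2) - f(a_1)$, and the right-hand side belongs to $H$ because $f(a_1), f(a_2) \in B \subseteq H$ and $H$ is closed under subtraction; by the coset observation this forces $a_1 = a_2$. Hence $m_f$ takes only the values $0$ and $1$. Since $\sum_{x \in G} m_f(x) = \# A = n$, we conclude $\#\supp(f) = n$, so the acyclicity sequence of $f$ is $\underbrace{1,1,\ldots,1}_{n-\mathrm{times}}$.

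For part (ii), let $f, g \in \mathcal{M}(A,B)$ with $m_f = m_g$, and fix an arbitrary $a \in A$. Setting $x = a + f(a)$, part (i) gives $m_f(x) = 1$, hence $m_g(x) = 1$, so there is a unique $a' \in A$ with $a' + g(a') = x = a + f(a)$. Then $a - a' = g(a') - f(a) \in H$, so $a = a'$ by the coset observation, and therefore $f(a) = g(a)$. As $a$ was arbitrary, $f = g$, i.e.\ $f$ is acyclic.

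I do not expect any serious obstacle here: the whole argument rests on the single observation that, under the stated hypotheses, the elements of $A$ are pairwise incongruent modulo $H$, and once this is in place both statements fall out immediately. The only point requiring a little care is to make sure the subgroup hypothesis is invoked in the right place, namely to guarantee that differences of elements of $B$ lie in $H$.
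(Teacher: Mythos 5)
Your proof is correct and follows essentially the same route as the paper: both arguments hinge on the fact that a difference of two elements of $B$ lies in the subgroup $B\cup\{0\}$, so that any coincidence $a+f(a)=a'+g(a')$ would place an element of $A$ inside $A+B$. Your packaging of this as ``distinct elements of $A$ lie in distinct cosets of $H$'' is a clean reformulation (and handles the zero-difference case slightly more explicitly than the paper does), but the underlying computation is identical.
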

\begin{proof}
Assume to the contrary, $m_f(x)>1$, for some $x\in G$. Then, there exist distinct  $a,a'\in A$ for which $x=a+f(a)=a'+f(a')$. We have $a=a'+f(a')-f(a)\in A+B$ as $f(a')-f(a)\in B$. This contradicts $A\cap(A+B)=\emptyset$. This gives (i).

Now, choose two matchings $f,g\in\mathcal{M}(A,B)$. Choose $a\in A$ such that $f(a)\neq g(a)$. From  the previous part, we have  $m_f(a+f(a))=1$. We claim that $m_g(a+f(a))=0$. If not, then  there exists $a'\in A$ such that $a+f(a)=a'+g(a')$. This implies $a=a'+g(a')-f(a)\in A+B$ as $g(a')-f(a)\in B$. This contradicts $A\cap (A+B)=\emptyset$. Therefore, $m_g(a+f(a))=0$ and so $m_f\neq m_g$. This  implies that $f$ is acyclic. The proof is complete.
\end{proof}
\begin{remark}
If $f$ is a matching whose acyclicity sequence only includes 1, then $f$ is not necessarily acyclic. Our simulation results  in Example \ref{ex3.4} show that there exist  2436 matchings whose acyclicity sequences only include 1. However, only 8 of them are  acyclic. The acyclicity sequences of $f,g\in\mathcal{M}(A,B)$ given as follows only include 1. 

{\scriptsize\begin{table}[h!]
\centering
\begin{tabular}{|c|c|}
\hline
$f$ & $g$\\
\hline
 $\begin{array}{l}0\to 18\\[-.8mm] 1\to 16\\[-.8mm] 2\to 17\\[-.8mm] 3\to 5\\[-.8mm] 12\to4\\[-.8mm] 13\to 7\\[-.8mm] 14\to8\\[-.8mm] 15\to 6\end{array}$ &  $\begin{array}{l}0\to 18\\[-.8mm] 1\to 16\\[-.8mm] 2\to17\\[-.8mm] 3\to 5\\[-.8mm] 12\to4\\[-.8mm] 13\to 8\\[-.8mm] 14\to6\\[-.8mm] 15\to 7\end{array}$\\
\hline
\end{tabular}
\end{table}}

Note that we have $m_f=m_g$ and this implies that $f$ and $g$ are not acyclic.
\end{remark}
Our simulation results show that for $A$ and $B$ with  $\#A=\#B$ and $A\cap(A+B)=\emptyset$ there exists at least one acyclic matching from $A$ to $B$. So we have the following conjecture:
\begin{conjecture}\label{c3.7}
Let $G$ be an abelian group. If $A$ and $B$ are two finite subsets of $G$ with $A\cap (A+B)=\emptyset$, then there exists $f\in\mathcal{M}(A,B)$ such that $f$ is acyclic and its acyclicity sequence only includes 1.
\end{conjecture}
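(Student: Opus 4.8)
We outline a possible approach to Conjecture~\ref{c3.7}.

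Since $A\cap(A+B)=\emptyset$, every bijection $A\to B$ is automatically a matching, so $\mathcal{M}(A,B)$ consists of all $n!$ bijections and the statement concerns bijections only. The plan is to reformulate the conclusion in terms of sum-sets and then split it into two independent requirements. For a bijection $f$ put $\sigma_f\colon A\to G$, $a\mapsto a+f(a)$. Then $f$ has acyclicity sequence $(1,\dots,1)$ exactly when $\sigma_f$ is injective, i.e.\ when $|\sigma_f(A)|=n$, and in that case $m_f$ is simply the indicator function of the $n$-element set $\sigma_f(A)$; hence for two bijections with all-ones sequence one has $m_f=m_g$ iff $\sigma_f(A)=\sigma_g(A)$. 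Therefore Conjecture~\ref{c3.7} is equivalent to the statement that there exists an $n$-element set $S\subseteq G$ which arises as $\sigma_f(A)$ for exactly one bijection $f\colon A\to B$. The two requirements are thus: (a) produce at least one bijection $f$ with $|\sigma_f(A)|=n$, and (b) control the resulting $S$ so that the bijection realising it is unique.

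For (a) I would use the polynomial method: writing $A=\{a_1,\dots,a_n\}$ and seeking an ordering $b_{i_1},\dots,b_{i_n}$ of $B$ with $a_1+b_{i_1},\dots,a_n+b_{i_n}$ pairwise distinct, one studies $\prod_{1\le i<j\le n}\bigl((x_i+a_i)-(x_j+a_j)\bigr)$ and tries to show, via the Combinatorial Nullstellensatz, that it does not vanish identically on $B^n$. This is the circle of ideas behind Alon's treatment of $\mathbb{Z}/p\mathbb{Z}$ and behind Snevily-type transversal theorems, and it should go through cleanly when $G$ has prime order; for general $G$ one would reduce to the $p$-primary components or pass to a quotient. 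I expect some restriction on $G$ to be genuinely necessary here, since in $\mathbb{Z}/2^k\mathbb{Z}$ there are pairs with $A\cap(A+B)=\emptyset$ admitting no distinct-sum bijection at all, so part of the task is to isolate the correct extra hypothesis (plausibly that $|G|$ be odd).

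For (b), note first that the greedy construction of the sets $F^{(t)}_{(A,B)}$ is of no use: every bijection with all-ones sequence carries the same acyclicity sequence $(1,\dots,1)$, so the maximisation procedure never separates them, and the tie must be broken through the sum-sets themselves. One natural attempt is extremal: fix a total order on $G$ (the obvious one when $G=\mathbb{Z}/m\mathbb{Z}$), among all distinct-sum bijections choose one whose sum-set $S$, listed in increasing order, is lexicographically largest, and try to show that this extremality forbids a second bijection realising $S$ --- the analogue of the rigidity that the subgroup hypothesis supplies in Theorem~\ref{t3.5}. An alternative is a permanent count: for an achievable $n$-set $S$, the number of bijections $f\colon A\to B$ with $\sigma_f(A)=S$ is the permanent of an explicit $0/1$ matrix, and one would try to exhibit an achievable $S$ for which this permanent equals $1$.

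The principal obstacle is the interaction of (a) and (b): the Combinatorial Nullstellensatz yields some distinct-sum bijection but gives no control over which set $S$ of sums occurs, while acyclicity is a property of $S$ alone. What one really needs is either a refined nullstellensatz-type statement forcing the sums to be distinct \emph{and} the factorisation of $S$ rigid, or a single extremal construction delivering both properties simultaneously; bridging this gap --- together with pinning down the hypothesis required for non-prime, and especially even-order, groups --- is where I expect the difficulty to concentrate.
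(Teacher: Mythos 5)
The statement you are addressing is not proved in the paper at all: Conjecture \ref{c3.7} is stated as an open conjecture, supported only by the authors' computer simulations, so there is no proof of record to compare against; and your submission is likewise not a proof but a research programme in which both of your declared steps --- (a) producing a distinct-sum bijection and (b) forcing uniqueness of the bijection realising some achievable sum-set --- are left open, with the interaction between them explicitly flagged as the unresolved difficulty. The one piece of mathematics you do carry out, the reformulation that a matching $f$ with all-ones acyclicity sequence is acyclic if and only if no other bijection realises the same $n$-element sum-set $\sigma_f(A)$, is correct (any $g$ whose sequence contains an entry $\geq 2$ automatically satisfies $m_g\neq m_f$ because $m_f$ is $\{0,1\}$-valued), and it is a sensible way to frame the problem.

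More importantly, your parenthetical remark about $\mathbb{Z}/2^k\mathbb{Z}$ deserves to be promoted from an aside to the main point, because it refutes the conjecture as stated. Take $G=\mathbb{Z}/4\mathbb{Z}$, $A=\{0,2\}$, $B=\{1,3\}$. Then $A+B=\{1,3\}$, so $A\cap(A+B)=\emptyset$ and both bijections from $A$ to $B$ are matchings; but the pairing $0\to 1$, $2\to 3$ yields the sums $1,1$ and the pairing $0\to 3$, $2\to 1$ yields the sums $3,3$, so neither matching has acyclicity sequence $1,1$, and no $f$ of the kind demanded by Conjecture \ref{c3.7} exists. (Both matchings are nevertheless acyclic, since their multiplicity functions are supported on disjoint sets, so Conjectures \ref{c2.1} and \ref{c3.1} are untouched by this example.) The correct conclusion is therefore not that your outline is hard to complete, but that the hypothesis must be strengthened --- for instance to groups of odd order, as you yourself suggest --- before steps (a) and (b) of your programme can even be posed; as written, no proof of Conjecture \ref{c3.7} can exist.
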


Note that Conjecture \ref{c3.7} provides a constructive approach to investigate Conjecture \ref{c2.1} and indeed it  implies Conjecture \ref{c2.1}.

\begin{definition}
We say that $A$ is {\it acylically matched} to $B$ if there exists an acyclic matching  from $A$ to $B$. We also say that $A$ is {\it strongly acylically matched} to $B$ if $A$ is acylically matched to $B$ and every matching from $A$ to $B$ is acyclic. 
\end{definition}
\begin{remark}
If $A$ and $B$ satisfy the conditions of Theorem \ref{t3.5},  then $A$ is strongly acylically matched to $B$.
\end{remark}
\begin{example}
Let $A$ and $B$ be as Example \ref{e3.3}. Then $A$ is not strongly acylically matched to $B$ as  $f_9$ is not acyclic $(m_{f_9}=m_{f_{19}})$.
\end{example}
\begin{remark}
That $A$ is acylically matched to $B$ does not imply that $A$ is strongly acylically matched to $B$. For  example our simulation results in Example \ref{ex3.4} show that even though $A$ is acylically matched to $B$, $A$ is not strongly acylically matched to $B$ as the matching $f$ given below is not acyclic 

{\scriptsize\begin{table}[h!]
\centering
\begin{tabular}{|c|}
\hline
$f$\\
\hline
$\begin{array}{l}0\to 18\\[-.8mm] 1\to 16\\[-.8mm] 2\to 17\\[-.8mm]3\to5\\[-.8mm]12\to4\\[-.8mm]13\to7\\[-.8mm]14\to8\\[-.8mm]15\to6
\end{array}$\\
\hline
\end{tabular}
\end{table}}
\end{remark}
\begin{remark}
As we mentioned, Theorem \ref{t3.5} provides a family of sets that are strongly acylically matched. Further investigations of other such families could turn out to be worthwhile.
\end{remark}
\begin{remark}
If $A$ is strongly acylically matched to $B$ and $f\in\mathcal{M}(A,B)$, the acyclicity sequence of $f$ does not only include 1 necessarily. For example, consider the subsets $A=\{2,4\}$ and $B=\{3,1\}$ of $\mathbb{Z}$. Then $A$ is strongly acylically matched  to $B$ but the acyclic matching $f\in\mathcal{M}(A,B)$ given via the rule $2\to 3$ and $4\to 1$ has 2 in its acyclicity sequence.
\end{remark}
\begin{conjecture}
If $A$ is acylically matched to $B$ and the acyclicity sequence of every $f\in\mathcal{M}(A,B)$ only includes 1,  then $A$ is strongly acylically matched to $B$.
\end{conjecture}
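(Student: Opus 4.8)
The plan is to translate the hypothesis into a statement about supports. For a matching $h$ write $\phi_h\colon A\to G$, $\phi_h(a)=a+h(a)$; then the acyclicity sequence of $h$ consists only of $1$'s exactly when $\phi_h$ is injective, and in that case $\supp(h)=\phi_h(A)$ has $\#A$ elements and $m_h=\mathbf 1_{\supp(h)}$. Hence, once we assume that \emph{every} $h\in\mathcal{M}(A,B)$ is of this type, for matchings $f,g$ we have $m_f=m_g$ if and only if $\supp(f)=\supp(g)$, so $h$ is acyclic if and only if $h$ is the only element of $\mathcal{M}(A,B)$ with support $\supp(h)$. Thus the statement is equivalent to the rigidity assertion: \emph{if every matching is of all-$1$ type and some support is the support of a unique matching, then every support is the support of a unique matching}, i.e.\ $h\mapsto\supp(h)$ is injective on $\mathcal{M}(A,B)$.

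The key step I would isolate is an exchange lemma. Assume for contradiction that $f\ne g$ are matchings with $\supp(f)=\supp(g)=:S$ (so $S\cap A=\emptyset$), and set $D=\{a\in A:f(a)\ne g(a)\}\neq\emptyset$. The permutation $\rho=\phi_g^{-1}\circ\phi_f$ of $A$ fixes $A\setminus D$ pointwise and is fixed-point-free on $D$. Fix $a\in D$ and let $(a=c_1,c_2,\dots,c_k)$ be its $\rho$-cycle ($k\ge2$), so that $c_i+f(c_i)=c_{i+1}+g(c_{i+1})$ with indices mod $k$; in particular $z:=c_k+f(c_k)=a+g(a)\in S$. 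Let $a'\in A$ be defined by $f(a')=g(a)$ (one checks $a'\in D$ and $a'\notin\{a,c_k\}$), and let $h$ be $f$ with its values at $a$ and $a'$ interchanged. Then $h$ is a bijection $A\to B$ and $\phi_h(a)=a+f(a')=a+g(a)=z=\phi_h(c_k)$, so $m_h(z)\ge2$: $h$ is not of all-$1$ type, hence by hypothesis $h$ is not a matching. Since $h$ differs from $f$ only at $a,a'$ and $\phi_h(a)=z\in S\subseteq G\setminus A$, the only way $h$ can fail to be a matching is $\phi_h(a')=a'+f(a)\in A$. Therefore $f^{-1}(g(a))+f(a)\in A$ for every $a\in D$, and symmetrically (exchanging $f$ and $g$) $g^{-1}(f(a))+g(a)\in A$ for every $a\in D$. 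When $\rho$ is a single transposition this already yields a contradiction, since then $f$ and $g$ agree off a $2$-element set $\{c_1,c_2\}$, forcing $\{f(c_1),f(c_2)\}=\{g(c_1),g(c_2)\}$, which is impossible as $c_1\neq c_2$ and $\phi_f$ is injective; the difficulty is concentrated in the longer cycles.

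The remaining, and hardest, step is to feed the existence of an acyclic matching into these relations to force $D=\emptyset$. Writing $\sigma=f^{-1}\circ g$ for the resulting fixed-point-free permutation of $D$, the exchange lemma reads $\sigma(a)+f(a)\in A$ and $\sigma^{-1}(a)+g(a)\in A$ for all $a\in D$; one would iterate these along the cycles of $\sigma$ and combine them with the fact that $f$, $g$, and a fixed acyclic matching $f_0$ all lie in $\mathcal{M}(A,B)$, for instance by choosing, among all pairs of distinct matchings with a common support, one minimizing $\#D$ and arguing from minimality. The main obstacle — and the reason this is only a conjecture — is that the hypothesis does not supply $A\cap(A+B)=\emptyset$, so the exchanged maps need not be matchings and the web of ``blocked'' memberships in $A$ above is genuinely intricate; indeed, without the acyclic-matching hypothesis it is consistent (for example $A=\{0,1,3\}$, $B=\{1,2,4\}$ in $\mathbb{Z}/7\mathbb{Z}$ has exactly two matchings, both of all-$1$ type with support $\{2,4,5\}$, neither acyclic), so a proof must locate precisely where an acyclic matching becomes incompatible with them. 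Given the conjectural status, I would first test the statement on further computer-generated examples before committing to this line of attack.
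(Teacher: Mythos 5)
This statement is left as an open conjecture in the paper; no proof is given there, so there is nothing to compare your attempt against except the conjecture itself. Your attempt is explicitly incomplete, and the gap is real. What you do establish is sound: the reformulation is correct (all multiplicities equal to $1$ means $a\mapsto a+h(a)$ is injective, so $m_h$ is the indicator function of $\supp(h)$, and under the blanket all-$1$ hypothesis acyclicity of $h$ is exactly uniqueness of its support among matchings); the exchange lemma is correctly argued (the swapped bijection $h$ has a repeated sum at $z\in S\subseteq G\setminus A$, so it cannot be a matching, and since it differs from $f$ only at $a$ and $a'$ the failure must occur at $a'$, yielding $f^{-1}(g(a))+f(a)\in A$ for every $a\in D$); the transposition case does collapse as you say; and your example $A=\{0,1,3\}$, $B=\{1,2,4\}$ in $\mathbb{Z}/7\mathbb{Z}$ checks out (exactly two matchings, both with support $\{2,4,5\}$, neither acyclic), which is a genuinely useful observation showing the hypothesis that $A$ be acyclically matched to $B$ cannot be dropped.

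The gap is the final step, and it is not a routine one: nowhere in your argument does the existence of an acyclic matching $f_0$ actually enter. The relations $\sigma(a)+f(a)\in A$ and $\sigma^{-1}(a)+g(a)\in A$ for $a\in D$ are necessary conditions satisfied by any pair $f\neq g$ with common support under the all-$1$ hypothesis — your own $\mathbb{Z}/7\mathbb{Z}$ example satisfies them — so they cannot by themselves produce a contradiction. A proof must exhibit a concrete mechanism by which $f_0$ (whose support is attained by no other matching) interacts with the pair $(f,g)$, e.g.\ by using $f_0$ to build a second matching with support $\supp(f_0)$ out of the cycle structure of $\sigma$, and the proposed ``minimize $\#D$'' scheme gives no indication of how that construction would go. As it stands the proposal is a plausible reduction plus an honest acknowledgement that the conjecture remains open; it is not a proof.
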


\section{Weak Matchings}
This section  aims to investigate substructures of matchings. We start with the following definition.
\begin{definition}
Let $G$ be an abelian group and $A$ and $B$ be two non-empty finite subsets of $G$ with the same cardinality $n$ and $B$ does not contain the neutral element. We say $A$ is weakly matched to $B$ of order $m$ ($1\leq m<n$) if for every subset  $A'$ of $A$ with $\# A'=m$, there exists a subset $B'$ of $B$ such that $A'$ is matched to $B'$, in usual sense.
\end{definition}
\begin{remark}\label{r}
If $A$ is matched to $B$, then one can verify easily that $A$ is weakly matched to $B$ of order $m$, for any $1\leq m<n$.
Consider an arbitrary non-empty subset $A'$ of $A$ with $\# A'=m$ and set $B':=f(A')$, where $f$ is a matching from $A$ to $B$. Then the mapping $g=f\big|_{A'}$ is a matching from $A'$ to $B'$ and so $A$ is weakly matched to $B'$ of order $m$.
\end{remark}
As we verified above, having two matchable subsets at hand, it is straightforward to show that the given sets are weakly matchable of any order.  The converse is not so obvious though. In what follows we prove a special case; weakly matchable subsets of order $n-1$ are matchable. We start with the following lemma.
\begin{lemma}\label{l}
Let $A$ and $B$ be non-empty finite subsets of an arbitrary group $G$, and assume that $\# A\leq\# B$ and that $A+B=A$. Then $B$ is a subgroup of $G$ and $A$ is a left coset of $B$.
\end{lemma}
\begin{proof}
If $b$ is in $B$ then the mapping $\begin{array}{rl}\varphi: A&\to A+b\\ a&\mapsto a+b\end{array}$ is injective and thus $\#(A+b)=\# A$. Since $A+b\subseteq A+B=A$ and $A$ is finite, it follows that $A+b=A$. Now let  $X=\{x\in G\mid A+x=A\}$. Then $B\subseteq X$ and $X$ is a subgroup of $G$. Also $A+X=A$. If $a\in A$, then $a+X$ is a left coset of the subgroup $X$, and thus $\#(a+X)=\# X$ and we have $a+X\subseteq A+X=A$. Then $\# B\geq \# A\geq \#(a+X)=\# X\geq \# B$, so $\# X=\# B$ and $\# A=\#(a+X)$. Since $B$ finite and is contained in $X$, and $\#B=\# X$, it follows that $B=X$, so $B$ is a subgroup as wanted. 

Since $A$ is finite and contains $a+X$, and $\# A=\# (a+X)$, it follows that $A=a+X$. We know that $X=B$, so $A=a+X=a+B$, and thus $A$ is the left coset $a+B$ of $B$. The proof is complete.
\end{proof}
\begin{corollary}\label{c}
Let $A$, $B$ and $G$ be as Lemma \ref{l}. Then $0\notin B$.
\end{corollary}
\begin{proof}
It is immediate from $B$ being  a subgroup of $G$. 
\end{proof}
Now we are ready to prove that the weakly matching property of order $n-1$ implies the matching property. We formulate this statement as follows. 
\begin{theorem}\label{t}
Let $G$ be an abelian group and $A$ and $B$ be nonempty subsets of $G$ with $\#A=\#B=n$ and $0\notin B$. Then if $A$ is weakly matched to $B$ of order $n-1$, then $A$ is matched to $B$.
\end{theorem}
\begin{proof}
We  break down the proof to two cases:

Case 1: If $A+B\neq A$, choose $y\in A+B\setminus A$. Then $y=a+b$, for some $a\in A$ and $b\in B$. Consider the subsets $A'=A\setminus \{a\}$ and $B'=B\setminus\{b\}$ of $A$ and $B$, respectively. Since $A$  is weakly matched to $B$ of order $n-1$, then $A'$ is matched to $B'$. Let $f':A'\to B'$ is a matching and define $f:A\to B$ via 
\begin{align*}
f(x)=\begin{cases} f'(x),& \mathrm{if}\; x\in A'\\ b,&\mathrm{if}\; x=a\end{cases}
\end{align*}
Then $f$ is a matching from $A$ to $B$. 

Case 2: If $A+B=A$, from   Corollary \ref{c} we conclude $0\in B$. This contradicts our assumption that $B$ does not contain the neutral element.
\end{proof}
Combining Remark \ref{r} and Theorem \ref{t} we can conclude that ``having a matching from $A$ to $B$" is equivalent to have that ``$A$ is weakly matched to $B$ of order $n-1$". It is worth pointing out that there is another concept in matching theory called {\it local matching} \cite{2}. 
Given two finite subsets $A,B\subset G$ satisfying $\#A=\#B$ and $0\notin B$, $A$ is locally matched to $B$ if for every subgroup $H\neq G$ such that 
\begin{itemize}
\item[(a)]
$A$ contains a coset of $H$ and
\item[(b)]
$H\cap B\neq\varnothing$,
\end{itemize}
there exists $A'\subset A$ such that $\#A'=\#(H\cap B)$ and a bijection $f:A'\to H\cap B$ such that for every $a\in A'$, we have $a+f(a)\notin A$.
It turns out that ``having  matching from $A$ to $B$" is equivalent to ``having local matching from $A$ to $B$" \cite[Theorem 3.1]{2}. 
So for two finite nonempty subsets $A$ and $B$ of a given abelian group $G$  with $\#A=\#B=n$ and $0\notin B$, the following statements are equivalent:
\begin{itemize}
\item[a)]
$A$ is matched to $B$,
\item[b)]
$A$ is locally matched to $B$,
\item[c)]
$A$ is weakly matched to $B$ of order $n-1$.
\end{itemize}

\section{Possible Research Problems}
First problem is whether weakly matchable subsets of lower orders are matchable. Another problem is formulating and proving linear analogue of weakly matchable subsets. More rigorously, given a field extension $K\subset L$, one may introduce analogue notion of weakly matchable $K$-subspaces $A$, $B$ of $L$ of certain orders and obtain the linear version of Theorem \ref{t}. See also \cite{2n} for some possibly helpful results in this direction.

\section{Computer Program}
In this section, we employ three algorithms to investigate acyclic matchings. In the  first  algorithm, we input $n$ and find all pairs $A$ and $B$ of subsets of $\mathbb{Z}/n\mathbb{Z}$ for which $A\cap(A+B)=\emptyset$. In the second algorithm, we find all matchings whose acyclicity sequences only include 1. It also checks whether matchings obtained in this manner are acyclic or not. The third algorithm, we inputs subsets $A$ and $B$ obtained by implementing Algorithm 1 and provides $C_i^{(A,B)}$ and $F^{(i)}_{(A,B)}$. It also checks whether elements of $F_{(A,B)}^{(t)}$ are acyclic or not. (Here, $t$ signifies the acyclicity index of $\mathcal{M}(A,B)$)
{\footnotesize\begin{algorithm}
\caption{Generating A and B}\label{euclid}
\begin{algorithmic}[1]
\Function {GenerateAandB}{$n$}
    \For{i in range($0$,$n-1$) }
        \State $ZnZ$.add($i$)
    \EndFor
    \For{$i$ in range(2,$n$/2)}
     \State $Subset(i)$=  {\Call {GiveSubsets}{$ZnZ$ $i$}}
	    \State
	    $PairofSubsets$.add(Combinations($Subset(i)$,2))
    \EndFor
    \For{$A$,$B$ in $PairofSubsets$ }
         \If  {\Call {HasWeakAcyclicity}{$A$,$B$,$n$}}
            \State $AB$.add($A$,$B$)	
        \EndIf
    \EndFor
\State return $AB$
\EndFunction
\Function{GiveSubsets}{$ZnZ$,$i$}
    \State $subset$=Combinations($ZnZ$,$i$)
    \State return $subset$
\EndFunction

\Function {HasWeakAcyclicity}{$A$,$B$,$n$}
    \State $i$=length($A$)
    \For{$k$ in range(0,$i$)}
        \For{$j$ in range(0,$i$) }
            \State $temp$=$A(k)$+$B(j)$
            \If{ $temp$ mod $n$ is in $A$ }
                \State 	return False
            \EndIf
        \EndFor
    \EndFor
    \State return True
    
\EndFunction

\Function{main}{}
\State get $n$
\State $AB$={\Call {GenerateAandB}{$n$}}
\State $ A,B {\leftarrow} a \ row\ of\ AB$
\State $support$={\Call {GiveSupport}{$A$,$B$,$n$}}
\State $sequence$= {\Call {GiveSequence}{$A$,$B$,$n$}}
\EndFunction
\end{algorithmic}
\end{algorithm}}

{\footnotesize\begin{algorithm}
\caption{Calculate matchings whose acyclicity  sequence only includes 1 }\label{euclid}
\begin{algorithmic}[1]
\Function {onesequences}{$A$,$B$,$n$}
	\State $temp$={\Call {GiveSequence}{$A$,$B$,$n$}}
	\If {sum($temp$)=length($temp$)}
		\State $oneseq$.add($temp$)
	\EndIf
\EndFunction

\Function{GiveSequence}{$A$,$B$,$n$}
    \State $supportwithrepeat$=GiveSupportwithrepeat($A$,$B$,$n$)
    \For {$i$ in range 0,$n$-1}
        \For {$j$ in range 0,length($supportwithrepeat$)}
            \If {supportwithrepeat($j$)=$i$}
               \State  $sequences(i)$+=1
            \EndIf
        \EndFor
    \EndFor
    \State remove 0s from $sequences$
	\State sort $sequences$ descending
	\State return $sequences$
\EndFunction
\Function{GiveSupport}{$A$,$B$,$n$}
    \State $support$=set(\Call{GiveSupportwithrepeat}{$A$,$B$,$n$})
    \State return $support$
\EndFunction
\Function{GiveSupportwithrepeat}{$A$,$B$,$n$}
\State $supportwithrepeat$=\Call {Givematchings}{$A$,$B$}
	\For {$match$ in $supportwithrepeat$}
		\For{$item$ in $match$}
		    \State $item$=sum($item$) mod $n$
		\EndFor

	\EndFor
	\State return $supportwithrepeat$
\EndFunction
\Function{Givematchings}{$A$,$B$}
    \State	matching=permutations(A,B)
    \State return matching
\EndFunction
\end{algorithmic}
\end{algorithm}}

\*\newpage

{\footnotesize\begin{algorithm}
\caption{Calculate $C_{(i)}^{(A,B)}$ and $F_{(A,B)}^{(i)}$ }\label{euclid}
\begin{algorithmic}[1]

\Function {CalculateF}{$sequences$,$supportwithrepeat$,$i$,$end$} 
    \If {$i$ less than $end$}
        \State $C(i)$=max($sequence(i)$ for $sequence$ in $sequences$)
        \For {$sequence$ in $sequences$}
            \If {$sequence(i)$==$C(i)$}
                \State $F(i)$.add($sequence(i)$)
                \State $sup(i)$.add($supportwithrepeat(i)$)
            \EndIf
        \EndFor
        \State $end$=min(lengh $F(i)$ members)
        \State CalculateF($F(i)$,$sup$,$i$+1,$end$)
        
    \ElsIf{$i$=$end$}
        \For {$f$ in $supportwithrepeat(i)$}
			\State isacyclic($f$)
		\EndFor

	\EndIf 
	
\EndFunction

\Function  {isacyclic}{$f$}
	\State $acyclic$=True
	\State $temp$=$supportwithrepeat$
	\State remove $f$ from $temp$
	\For { $item$ in $temp$}
		\If {AssociatedMultiplicity($item$)=AssociatedMultiplicity($f$)}
			\State $acyclic$=False
		\EndIf

	\EndFor
	\State return $acyclic$
\EndFunction

\Function{AssociatedMultiplicity}{$matching$}
    \For {$i$ in range 0,$n$-1}
        \For {$j$ in range 0,length($matching$)}
            \If {$matching(j)$=$i$}
                \State $m(i)$+=1
            \EndIf
        \EndFor
    \EndFor
    \State return $m$
\EndFunction
\end{algorithmic}
\end{algorithm}}

\end{document}